\title[]{A note on discrete Heisenberg uniqueness pairs for the parabola} 
\author{Felipe Gon\c calves and Jo\~ao P. G. Ramos}
\newtheorem*{defn}{Definition}
\newtheorem{theorem}{Theorem}
\newtheorem{lemma}{Lemma}
\newtheorem{rem}{Remark}
\newcommand{\R}{\mathbb{R}}
\newcommand{\N}{\mathbb{N}}
\newcommand{\Z}{\mathbb{Z}}
\newcommand{\mmd}{\mathrm{d}}
\newcommand{\wh}{\widehat}
\renewcommand{\P}{\mathcal{P}}
\newcommand{\F}{\mathcal{F}}
\newcommand{\ep}{\varepsilon}
\begin{document}

\begin{abstract}
We discuss on Heisenberg uniqueness pairs for the parabola given by discrete sequences along straight lines. Our method consists in linking the problem at hand with recent uniqueness results for the Fourier transform.
\end{abstract}

\maketitle

\section{Introduction} For an algebraic curve $\Gamma \subset \R^2$ and a set $\Lambda \subset \R^2$, we say that the pair $(\Gamma,\Lambda)$ is a \emph{uniqueness pair} (or Heisenberg uniqueness pair) if whenever 
$\mu$ is a borel complex measure on $\Gamma$, absolutely continuous with respect to the arc-length measure on $\Gamma$, $\mu\in L^1(\Gamma)$ and $\widehat{\mu}|_{\Lambda} = 0,$ then $\mu \equiv 0$. Here we define the
Fourier transform \footnote{Also called the Fourier extension of $\mu$ in Fourier restriction theory, or the characteristic function of the measure $\mu$ in probability theory.} of a measure $\mu$ on $\R^2$ by
\begin{align}\label{def:fourierextensionmu}
\wh \mu(\xi,\eta) = \int_{\R^2} e^{-2\pi i (\xi x + \eta y)} \, \mmd \mu(x,y).
\end{align}

This concept is inspired in the classical Heisenberg uncertainty principle for the Fourier transform, but it has been explored in more depth since the pioneering work of Hedenmalm and Montes-Rodriguez \cite{HMR}. 
There, the authors analyze some classical cases of Heisenberg uniqueness pairs with $\Gamma$ being a conic section. In particular, their main results concern the hyperbola 
$\Gamma = \{(x,y) \in \R^2 \colon xy = 1\}$ and lattice-crosses of the form $\Lambda_{\alpha,\beta} = ( \alpha \Z \times \{0\} ) \cup (\{0\} \times \beta \Z)$, where they prove
$(\Gamma, \Lambda_{\alpha,\beta})$ is a uniqueness pair if and only if $\alpha \beta \le 1$. 

Following their work, subsequent progress has been made on several of the conic section cases and other related questions \cite{GS, JK, HMR2, CMHMR}. We highlight the \emph{circle} case, in which 
the works of Sj\"olin \cite{Sjoe1} and Lev \cite{Lev} prove that if  $\Lambda=L_1 \cup ... \cup L_N$, where $L_1,...,L_N$ are straight lines with a common intersection point, then $(\mathbb{S}^1,\Lambda)$ is not a uniqueness pair exactly when all intersection angles of the lines $L_1,...,L_N$ are rational multiples of $\pi$ (see also \cite{FBGJ} for a different approach).

In this note, however, we shall be concerned with the specific case of  the parabola
$$
\P = \{(x,y) \in \R^2 \colon x^2 = y\}.
$$
A simple computation shows that any finite measure $\mu$ supported on $\P$  has the property that $\widehat{\mu}$ solves the \emph{Schr\"odinger equation}
\[
i \partial_y \widehat{\mu} + \partial_x^2 \widehat{\mu} = 0.
\]
In that regard, we mention the work of Sj\"olin \cite{Sjoe2}, where the author analyzes when sets of the form $\Lambda = E_1 \cup E_2$ are so that $(\P,\Lambda)$ is a uniqueness pair and 
$E_1,E_2$ are two sets of positive length contained in distinct straight lines. See also \cite{GJ} and \cite{Vieli} for generalizations to higher dimensions. 

\begin{defn}[Weak uniqueness pair] We say that a pair $(\Gamma,\Lambda) \subset \R^2\times \R^2$ is a \emph{weak} uniqueness pair if there is $C=C(\Gamma,\Lambda)>0$ such that whenever $\mu$ is a complex Borel measure with $\text{supp}(\mu) \subset \Gamma$ and absolutely continuous with respect to the arc-length measure of $\Gamma$, with $\mu\in L^1(\Gamma)$, and satisfying
$$
\wh{\mu}|_\Lambda=0 \ \ \text{and} \ \ (1+|x|^C)|\wh{ \mu}(x,0)| \in L^1(\R),
$$
then $\mu\equiv 0$. 
\end{defn}


As highlighted before, we aim to look at this concept for the parabola case $\Gamma=\P$ and a set $\Lambda$ consisting of discrete points along two or three lines. This can be seen, for instance, as a natural complement to the results of Hedenmalm and Montes-Rodriguez \cite{HMR}, replacing the hyperbola with the parabola. 

Our main result is the following.

\begin{theorem}\label{thm:teo-1} Let $\P = \{(t,t^2): \, t \in \R\}$ be the parabola. 

\noindent $\bullet \ $ $($Two parallel lines$)$ Let
 $$
 \Lambda = \{(\pm c_1 \, n^{\alpha},0):\, n \in \N\} \cup \{(\pm c_2\, n^{\beta},1):\, n \in \N\}
 $$
 for some $c_1,c_2>0$, where $\N=\{0,1,2,3,...\}$. Then $(\P,\Lambda)$ is a weak uniqueness pair if $(\alpha,\beta) \in A,$ where $A$ is defined as 
 
     \begin{align*} A  = & \left\{(\alpha,\beta) \in (0,1)^2
 	\text{ and either }\alpha < \frac{(1-\beta)^2}{2-\beta}
 	\text{ or } \beta < \frac{(1-\alpha)^2}{2-\alpha}\right\}.
 \end{align*}

\noindent $\bullet \ $ $($Three lines intersecting at a point$)$ Let
 $$
 \Lambda = \{\pm (c_1 \, n^{-\alpha}, a \,c_1 \,n^{-\alpha}): n\in \N^*\} \cup \{ \pm (c_2\, n^{\beta/2},0): n \in \N\} \cup  \{\pm (c_3\,n^{-\gamma}, d \,c_3 \,n^{-\gamma}): n\in \N^*\}
 $$
for some $c_1,c_2,c_3>0$, with $a\neq 0$, $d\neq 0$ and $a\neq d$. Then $(\P,\Lambda)$ is a weak uniqueness pair if $(\alpha,\beta),(\gamma,\beta) \in A$.
\end{theorem}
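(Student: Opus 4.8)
The plan is to translate the vanishing of $\wh\mu$ along each line into a vanishing statement for a (fractional) Fourier transform of a single density, and then to invoke the recent Fourier uniqueness results for power sequences that single out the region $A$. First I would dispose of the arc length: writing a point of $\P$ as $(t,t^2)$, the hypotheses that $\mu$ be absolutely continuous with respect to arc length and that $\mu\in L^1(\P)$ let me encode $\mu$ by a density $g\in L^1(\R)$ so that
\[
\wh\mu(\xi,\eta)=\int_\R e^{-2\pi i(\xi t+\eta t^2)}\,g(t)\,\mmd t .
\]
In this notation the hypothesis $(1+|x|^C)|\wh\mu(x,0)|\in L^1(\R)$ reads $(1+|x|^C)|\wh g(x)|\in L^1(\R)$, which forces $g$ to be of class $C^k$ with $k$ comparable to $C$ and with controlled derivatives; combined with $g\in L^1$, this is exactly the two-sided regularity that the Fourier uniqueness theorem requires, and it accounts for the role of the weak hypothesis.

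Next I would restrict $\wh\mu$ to each line appearing in $\Lambda$ and recognise it, up to a unimodular chirp and an affine rescaling, as a fractional Fourier transform of $g$. On a horizontal line $\{\eta=c\}$ one has $\wh\mu(\cdot,c)=\wh{e^{-2\pi i c t^2}g}$; completing the square exhibits the chirp multiplication $g\mapsto e^{-2\pi i c t^2}g$ as the metaplectic operator of a shear in phase space, so that composing with the Fourier transform yields $\F_{\theta(c)}g$ for an angle $\theta(c)$, with $\theta(0)=\pi/2$ and $\theta(1)\neq\pi/2$. On a line $\{\eta=a\xi\}$ through the origin, completing the square in $\xi(t+at^2)$ produces instead a chirp (Fresnel) transform whose natural frequency variable is $\sqrt{\xi}$; this is precisely the origin of the exponent $\beta/2$ on the axis in the three-line statement, since it is $\sqrt{\xi}$, and not $\xi$, that must be matched against the axis variable. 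In every case I would record the outcome as: $\F_{\theta_j}g$ vanishes along a sequence comparable to $\{\pm c_j n^{\alpha_j}\}$, where the angles $\theta_j$ attached to two distinct lines are genuinely distinct because the lines are distinct and, in the three-line case, because $a\neq d$ and $a,d\neq 0$.

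For the two parallel lines I would then conjugate by a metaplectic rotation sending the two angles $\theta(0)$ and $\theta(1)$ to $0$ and some $\theta_0\in(0,\pi/2)$; this turns the pair of conditions into the statement that $h$ vanishes on $\{\pm c_1 n^{\alpha}\}$ while $\F_{\theta_0}h$ vanishes on $\{\pm c_2 n^{\beta}\}$, for $h=Mg$ a metaplectic image of $g$, which up to a final chirp is the setting of the quoted Fourier uniqueness theorem; that theorem yields $h\equiv 0$, hence $g\equiv 0$ and $\mu\equiv 0$, precisely when $(\alpha,\beta)\in A$. For the three lines I would apply this reduction twice, pairing the slope-$a$ line with the axis to obtain the constraint $(\alpha,\beta)\in A$, and the slope-$d$ line with the axis to obtain $(\gamma,\beta)\in A$; the conditions $a\neq d$ and $a,d\neq 0$ guarantee that each pairing is nondegenerate, i.e.\ that the two angles involved are distinct. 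The main obstacle I anticipate is the bookkeeping in the metaplectic reduction: one must check that the chirp factors are harmless, since being unimodular they do not affect zero sets, and, more delicately, that the affine rescaling of the argument carries the power sequence $\{c_j n^{\alpha_j}\}$ to a sequence with the same exponent up to a constant --- in particular the $\sqrt{\xi}$ normalization responsible for $\beta\mapsto\beta/2$ must be tracked exactly --- and that the regularity furnished by the weak hypothesis survives the conjugation, so that the Fourier uniqueness theorem genuinely applies.
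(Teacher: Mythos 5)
Your reduction for the two parallel lines is essentially the paper's: restricting $\wh\mu$ to $\{y=0\}$ and $\{y=1\}$ exhibits, up to unimodular chirps and a rescaling, a single function $f_1(t)=\wh g(t)e^{\pi i t^2/2}$ together with its Fourier transform, vanishing on $\pm c_1 n^{\alpha}$ and on $\pm c_2 n^{\beta}/2$ respectively. But there are two genuine gaps. First, the quoted uniqueness theorem is stated for Schwartz functions, while the weak hypothesis only gives $(1+|x|^C)|f_1(x)|\in L^1(\R)$ for one finite $C$ and says nothing a priori about the decay of $\wh{f_1}$. Your claim that this ``is exactly the two-sided regularity that the Fourier uniqueness theorem requires'' is not correct, and the passage from $\F^C$ to $\Sch(\R)$ is a substantive step: the paper proves a bootstrap lemma showing that the vanishing conditions themselves force self-improving decay, via the quantitative bounds $|f(x)|\leq B_{\alpha,k}I_k(\wh f)|x|^{k(\alpha-1)/\alpha}$ and their duals, whose iteration (this is where the threshold $C(\alpha,\beta)$ and the value $C>2$ come from) eventually yields $f_1\in\Sch(\R)$. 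Without this step the uniqueness theorem cannot be invoked.

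Second, and more seriously, the three-line case does not follow by ``applying the reduction twice.'' The restriction of $\wh\mu$ to a line $y=ax$ through the origin depends, after completing the square, only on the even part of $u\mapsto g(u-1/(2a))$; equivalently, the paper's auxiliary function $H$ is built from $\wh g(\sqrt s)e^{\pi i\sqrt s/a}+\wh g(-\sqrt s)e^{-\pi i\sqrt s/a}$ and is supported on $s>0$. Hence pairing the slope-$a$ line with the axis and deducing $H\equiv 0$ only shows that $\psi(x)=\wh g(x)e^{\pi i x/a}$ is \emph{odd}; it cannot give $g\equiv 0$ (the paper's final remarks exhibit nonzero counterexamples for such a two-line configuration). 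The hypothesis $a\neq d$ is not, as you suggest, there to make each individual pairing nondegenerate: it is what makes the final step work, namely combining the oddness of $\wh g(x)e^{\pi i x/a}$ and of $\wh g(x)e^{\pi i x/d}$ to force $\wh g\equiv 0$. That combination step is absent from your plan, and it is precisely where the third line is used. Relatedly, your bookkeeping of the exponents is off: the $\beta/2$ on the axis arises from the substitution $s=t^2$ defining $H$, and the nodes $c_1 n^{-\alpha}$ (note the negative exponent) on the slanted line become the power sequence $n^{\alpha}/(4ac_1)$ only after the inversion $x\mapsto -1/(4ax)$ in the frequency variable of $\wh H$ --- neither feature is captured by viewing the slanted-line restriction as a fractional Fourier transform evaluated at $\sqrt\xi$.
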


\begin{figure}
	\centering
	\includegraphics[scale=0.13]{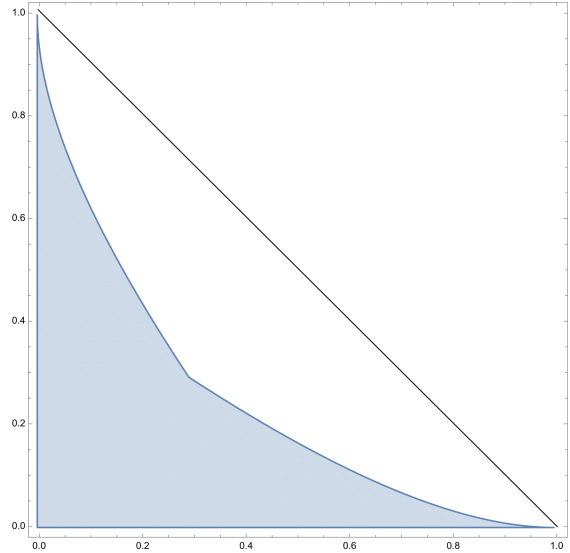}
	\caption{In {\color{blue} blue}, the region $A$ described in Theorem \ref{thm:teo-1}.}
\end{figure}

We prove that $(\P,\Lambda)$ is a weak uniqueness pair, with any $C>2$ in the definition of weak uniqueness pairs, for the two lines case, and with any
$$
C>2\max\bigl(2,1+3\lceil \gamma/(1-\gamma) \rceil,\, 1+3\lceil \alpha/(1-\alpha) \rceil\bigr)
$$
for three lines. Using standard Galilean symmetries of Schrodinger's equation is easy to see that an analogue of our result still follows for \emph{any} two horizontal lines and \emph{any} three lines with a single common intersection and one of them horizontal. Using pseudo-conformal inversion
$$
\wh \mu (x,y) \mapsto (a+by)^{-1/2} e^{ibx^2/(4(a+by))}\wh \mu\Bigl(\frac{x}{a+by},\frac{c+dy}{a+by}\Bigr), \quad ad-bc=1  
$$
the intersection can actually be at infinity, that is, the three lines case still is true if
$$
\Lambda=\{\pm (a, c_1 \,n^{\alpha}): n\in \N^*\} \cup \{ \pm (c_2\, n^{\beta/2},0): n \in \N\} \cup  \{\pm (d, c_3 \,n^{\gamma}): n\in \N^*\}
$$
for $a\neq d$. 

Notice that our theorem extends some of the previous results by Sj\"olin \cite{Sjoe1} for the parabola to the case of discrete sequences, rather than large measure sets. Theorem \ref{thm:teo-1} has an elegant short proof, connecting Heisenberg uncertainty principles to Fourier uniqueness pairs as in \cite{RamosSousa}.

The rest of this manuscript is devoted to the proof of Theorem \ref{thm:teo-1}, together with a few comments on its proof and possible generalizations. 

\section{Proof of Theorem \ref{thm:teo-1}} 

To prove Theorem \ref{thm:teo-1} we will invoke the main result of \cite{RamosSousa}, which we state for convenience. In what follows $\mathcal{S}(\R)$ stands for the Schwartz class.

\begin{theorem}[Main theorem in \cite{RamosSousa}]\label{thm:fourier-uniqueness} 
Let $f\in \mathcal{S}(\R)$ and suppose that $(\alpha,\beta) \in A$, where $A$ is as in Theorem \ref{thm:teo-1}. 
If for some $c_1,c_2>0$ we have $f(\pm c_1 n^{\alpha})=\widehat{f}(\pm c_2 n^{\beta})=0$ for every $n\in \N$ sufficiently large, then $f\equiv 0$.
\end{theorem}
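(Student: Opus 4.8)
The plan is to deduce the statement from a bootstrapping argument that converts the abundance of zeros of $f$ and of $\widehat f$ into super-Gaussian decay of both, which then forces $f\equiv 0$ through a sharp uncertainty principle. First I would reduce to a single parity: writing $f=f_{\mathrm{ev}}+f_{\mathrm{odd}}$, each summand again vanishes at $\pm c_1 n^\alpha$ and has Fourier transform vanishing at $\pm c_2 n^\beta$, since the sampling sets are symmetric under $x\mapsto -x$; hence it suffices to treat even $f$, the odd case being handled identically after factoring out the vanishing at the origin. Because $\widehat{\widehat f}=f$ on even functions, the two alternatives defining $A$ are interchanged by the swap $f\leftrightarrow\widehat f$, so I may assume the active condition is $\alpha<\frac{(1-\beta)^2}{2-\beta}$; that is, the zeros of $f$ form the denser sequence.

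The quantitative input is that $\{c_1 n^\alpha\}$ with $\alpha<1$ is superlinearly dense: it contains $\sim (R/c_1)^{1/\alpha}$ points in $[0,R]$, with consecutive gaps near a point $x$ of size $h(x)\sim x^{\,1-1/\alpha}$. I would couple this density to the regularity of $f$ through the holomorphic generating transform
\[
F(\tau)=\int_{\R} f(x)\,e^{\pi i\tau x^2}\,\mmd x,\qquad \tau\in\mathbb{H},
\]
whose two cusps $\tau\to i\infty$ and $\tau\to 0$ encode, respectively, the behaviour of $f$ near the origin and that of $\widehat f$, the latter via the exact Gaussian Fourier identity relating $\tau$ and $-1/\tau$. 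The engine is an approximate summation formula: one expands $F$ against the node set, replacing $\int f\,e^{\pi i\tau x^2}\,\mmd x$ by a quadrature $\sum_n w_n\, f(c_1 n^\alpha)\,e^{\pi i\tau (c_1 n^\alpha)^2}$, all but finitely many of whose terms vanish by hypothesis, and controls the quadrature error by the gap size $h$ and a derivative of the integrand. Running the same construction on the $\widehat f$-side with the nodes $c_2 n^\beta$ produces a pair of coupled bounds that improve the assumed decay of $f$ and of $\widehat f$; iterating the improvement upgrades Schwartz decay first to exponential and then past the Gaussian threshold, at which point a Hardy-type uncertainty principle (in the sharp Beurling--H\"ormander form) forces $f\equiv 0$.

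The main obstacle is making this coupled improvement quantitative and \emph{sharp}. Since $f$ is merely Schwartz and not entire, the quadrature error cannot be read off from a single global growth order; instead one must localize the generating transform so that the error near $x$ depends only on the nodes and derivatives in a neighbourhood of $x$, rather than on a nonlocal weighted norm of $\widehat f$. The precise exponent $\frac{(1-\beta)^2}{2-\beta}$ delimiting $A$ should emerge from optimizing, at each step of the iteration, the balance between the decay gained per step (governed by the gap exponents $1/\alpha-1$ and $1/\beta-1$) and the loss incurred in differentiating the Gaussian weight $e^{\pi i\tau x^2}$; checking that this balance closes exactly on $A$, and not on a strictly smaller region, is where the real work lies. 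I expect the parity reduction, the derivation of the localized quadrature estimate, and the concluding appeal to the uncertainty principle to be routine once the sharp bookkeeping of the iteration is in place.
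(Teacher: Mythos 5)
A preliminary remark on the comparison itself: the paper does not prove this statement at all. It is imported verbatim as the main theorem of \cite{RamosSousa} and used as a black box; the only material in the paper that touches its proof is Lemma \ref{thm:reduction}, which recalls from \cite{RamosSousa} the estimates \eqref{eq:fk}--\eqref{eq:fhatj} and their self-improving character. Measured against that argument, your skeleton is the right one: your ``quadrature error controlled by the gap size $h(x)\sim |x|^{1-1/\alpha}$ and a derivative of the integrand'' is, once unwound, exactly the mechanism behind \eqref{eq:fk}--\eqref{eq:fhatj} (vanishing at $\sim k$ consecutive nodes, Newton's divided differences of order $k$ --- a first-order quadrature bound with a single derivative will not suffice --- and $\|f^{(k)}\|_\infty \le (2\pi)^k I_k(\wh f\,)$), and your ``coupled bounds that improve the assumed decay'' is the same self-improvement iterated in Lemma \ref{thm:reduction}. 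The parity reduction and the $f \leftrightarrow \wh f$ swap are valid and routine, and the holomorphic transform $F(\tau)$ is inert scaffolding: nothing in your plan uses its cusp structure beyond repackaging the one-dimensional gap estimate.

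The genuine gap is that you defer precisely the step that constitutes the theorem. The polynomial-type self-improvement alone (Lemma \ref{thm:reduction}) needs only $\alpha+\beta<1$ and yields only $f\in\Sch(\R)$, which here is already a hypothesis; so for Schwartz $f$ the whole content of the statement is the phase you dispose of in one clause, namely that the iteration upgrades Schwartz decay ``first to exponential and then past the Gaussian threshold.'' This is not automatic, and it is where the curve $\alpha=(1-\beta)^2/(2-\beta)$ is born: for a general $f \in \Sch(\R)$ the moments $I_k(f)$, $I_k(\wh f\,)$ are all finite but their growth in $k$ is completely uncontrolled, so optimizing \eqref{eq:fk}--\eqref{eq:fhatj} over $k$ gives nothing at the first pass; one must track the double recursion among the quantities $B_{\alpha,k}I_k(\wh f\,)$ and $B_{\beta,j}I_j(f)$ and show it produces Gevrey-type control $I_k\lesssim C^k(k!)^{\rho}$, and it is exactly this bookkeeping that succeeds only on $A$. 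Declaring that the balance ``closes exactly on $A$'' is a restatement of the theorem, not an argument. Two smaller points: your Beurling--H\"ormander endgame requires super-Gaussian decay on \emph{both} sides; this is plausible on all of $A$ (since $A\subset\{\alpha+\beta<1\}$, each full cycle multiplies a stretched-exponential decay exponent by $\tfrac{(1-\alpha)(1-\beta)}{\alpha\beta}>1$ once such decay exists), but it must be verified, and a one-sided conclusion --- enough decay of $\wh f$ to make $f$ entire of order $<1/\alpha$, contradicting its $\gtrsim R^{1/\alpha}$ real zeros via Jensen's formula --- is a safer exit. Finally, your guiding principle that $A$ should emerge as the \emph{sharp} region of an optimization is misplaced: by Theorem \ref{thm:nazarov-sodin} the conclusion actually holds on the strictly larger region $\alpha+\beta<1$, so $A$ is an artifact of this particular scheme, not an optimum that your bookkeeping could be calibrated to confirm.
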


Theorem \ref{thm:fourier-uniqueness} is directly related to the Fourier interpolation formula by Radchenko and Viazovska \cite{RadchenkoViazovska}. Indeed, they proved in \cite{RadchenkoViazovska} that any function $f \in \mathcal{S}_{even}(\R)$ may be determined -- and in fact \emph{interpolated} -- from the set $\{f(\sqrt{n})\}_{n \ge 0} \cup \{\widehat{f}(\sqrt{n})\}_{n\ge0},$ and Theorem \ref{thm:fourier-uniqueness} deals with the same uniqueness question for more general powers of integers. 

 To use Theorem \ref{thm:fourier-uniqueness} we need to pass from low to infinite regularity, and so we define
$$\F^C=\bigl\{f \in L^1(\R): (1+|x|^C)\bigl(|f(x)| + |\widehat{f}(x)|\bigr) \in L^1(\R)\bigr\},$$ where $C\geq 0$. Note $\wh f$ and $f$ are continuous if $f\in \F^C$. We have the following lemma. 

\begin{lemma}\label{thm:reduction}  
Let $\alpha,\beta>0$ with $\alpha+\beta<1$ and define 
$$
C(\alpha,\beta):=1+\Bigl\lfloor 2\frac{\min(\alpha,\beta)}{1-\alpha-\beta}\Bigr\rfloor.
$$ 
Suppose $f \in \F^C(\R)$, for some $C> C(\alpha,\beta)$, and
\begin{equation}\label{eq:fourier-uniqueness} 
f(\pm c_1 n^{\alpha}) = \widehat{f}(\pm c_2 n^{\beta}) = 0, \, \forall \, n \in \N,
\end{equation}
for some $c_1,c_2>0$. Then $f \in \mathcal{S}(\R)$. 
\end{lemma}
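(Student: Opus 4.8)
The plan is to run a bootstrapping argument that alternately upgrades the decay of $f$ and of $\widehat f$ by trading decay for smoothness and back, the gain per cycle being governed by the quantity $\lambda=\frac{(1-\alpha)(1-\beta)}{\alpha\beta}$, which exceeds $1$ precisely when $\alpha+\beta<1$. First I would record the two mechanisms that drive the iteration. The first is the standard duality: if $(1+|x|)^{s}g\in L^1(\R)$ then $\widehat g\in C^{m}(\R)$ with bounded derivatives whenever $m\le s-1$, so decay of order $s$ buys roughly $s$ bounded derivatives of the transform. The second, which is the heart of the matter, converts smoothness plus the prescribed vanishing into decay: if $g\in C^{M}(\R)$ with $\|g^{(M)}\|_\infty<\infty$ and $g$ vanishes on $\{\pm c\,n^{\gamma}\}$, then interpolating $g$ at the $M$ nearest zeros to a point $x$ and using Newton's divided-difference remainder (via Hermite--Genocchi, which is valid for complex-valued $g$) gives $|g(x)|\lesssim_M \|g^{(M)}\|_\infty\,\delta(x)^{M}$, where $\delta(x)$ is the local gap between consecutive zeros. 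Since $c(n+1)^{\gamma}-cn^{\gamma}\sim c\gamma n^{\gamma-1}$ and $n\sim (|x|/c)^{1/\gamma}$ near $x$, one has $\delta(x)\sim |x|^{1-1/\gamma}$, and because $\gamma\in(0,1)$ these gaps shrink, yielding $|g(x)|\lesssim_M |x|^{-M(1-\gamma)/\gamma}$. Thus $M$ bounded derivatives and the sparse vanishing produce polynomial decay of order $M(1-\gamma)/\gamma$.

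With these in hand I would iterate. Starting from $f\in\F^C$, one extracts $f,\widehat f\in C^{M_0}$ with bounded derivatives, $M_0=\lfloor C\rfloor$. Applying the second mechanism to $f$ (vanishing on $\pm c_1 n^\alpha$) gives decay of $f$ of order $M_0(1-\alpha)/\alpha$, and the first mechanism then upgrades $\widehat f$ to $C^{M_1}$ with $M_1\approx M_0(1-\alpha)/\alpha-2$; applying the second mechanism to $\widehat f$ (vanishing on $\pm c_2 n^\beta$) and the first once more returns $f\in C^{M_2}$ with $M_2\approx \lambda M_0-2/\beta$. A short computation shows $\lambda>1\iff\alpha+\beta<1$, and that $M_2>M_0$ as soon as $M_0>\frac{2\alpha}{1-\alpha-\beta}$; reversing the roles of $\alpha$ and $\beta$ (starting the cycle from $\widehat f$) replaces $\alpha$ by $\beta$, so one may demand only $M_0>\frac{2\min(\alpha,\beta)}{1-\alpha-\beta}$. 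This is exactly what the hypothesis $C>C(\alpha,\beta)=1+\lfloor 2\min(\alpha,\beta)/(1-\alpha-\beta)\rfloor$ guarantees, so the cycle is strictly increasing and $M_{2j}\to\infty$. Hence $f$ and $\widehat f$ lie in $C^\infty$ with bounded derivatives, and along the way the decay orders of $f$ and $\widehat f$ tend to infinity.

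It remains to promote this to full Schwartz control, i.e. decay of every derivative. Here I would invoke Rolle's theorem applied to the real and imaginary parts of $f$: between two consecutive real zeros of $\Re f$ there is a zero of $\Re(f')$, so $\Re(f')$ vanishes on a sequence with the same gap asymptotics $|x|^{1-1/\alpha}$, and likewise for $\Im(f')$; iterating, each $\Re(f^{(j)})$ and $\Im(f^{(j)})$ vanishes on such a sparse set. Since $f\in C^\infty$ with bounded derivatives, the second mechanism applied to these real-valued functions yields $|f^{(j)}(x)|\lesssim_M |x|^{-M(1-\alpha)/\alpha}$ for every $M$, so each derivative decays faster than any polynomial; this gives $\sup_x |x|^N|f^{(k)}(x)|<\infty$ for all $N,k$, whence $f\in\Sch(\R)$. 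The main obstacle is the second mechanism: one must make the interpolation estimate uniform as the number of nodes $M$ grows, controlling the $M$-dependent constants, verifying that the $M$ gaps surrounding $x$ are mutually comparable so that $\delta(x)\sim |x|^{1-1/\gamma}$ may be used for all of them simultaneously, and tracking the integer losses in the decay--smoothness conversions carefully enough to recover the exact threshold $C(\alpha,\beta)$. The remaining steps are routine.
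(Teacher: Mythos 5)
Your proposal is correct in its essentials and runs the same self-improving bootstrap as the paper, landing on the same threshold $2\min(\alpha,\beta)/(1-\alpha-\beta)$; the difference is in how the key decay estimate is implemented. The paper simply imports Lemmas 5 and 6 of \cite{RamosSousa}, which convert finiteness of the moments $I_k(\wh f)$ directly into the pointwise bound $|f(x)|\lesssim I_k(\wh f)\,|x|^{k(\alpha-1)/\alpha}$ (finite differences performed under the Fourier integral), and then iterates exactly your loop: decay of $f$ gives moments of $f$, hence decay of $\wh f$, and so on, with a loss of $1+\ep$ per half-cycle. You instead re-derive an equivalent estimate from scratch via Newton divided differences controlled by $\|f^{(M)}\|_\infty$ (Hermite--Genocchi, correctly invoked for complex-valued $f$); since $\|f^{(M)}\|_\infty\le (2\pi)^M I_M(\wh f)$, the two routes are interchangeable up to the integer bookkeeping you flag, and your slightly more generous loss of $2$ per half-cycle still reproduces the stated $C(\alpha,\beta)$. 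Two small remarks: your worry about uniformity in the number of nodes $M$ is not actually needed, since each target decay rate is achieved with a fixed $M$ and the constants $B_{\alpha,M}$, $X_{\alpha,M}$ may depend on $M$ freely; and for the final upgrade from ``$f\in\F^C$ for all $C$'' to $f\in\Sch(\R)$ (which the paper dismisses as ``not hard to show''), your iterated-Rolle argument on $\mathrm{Re}\,f^{(j)}$ and $\mathrm{Im}\,f^{(j)}$ works but is more laborious than necessary --- the Landau--Kolmogorov inequality on unit intervals, $\sup_{|t-x|\le 1}|f'(t)|^2\lesssim \sup_{|t-x|\le 1}|f(t)|\cdot\sup_{|t-x|\le 1}|f''(t)|$, converts rapid decay of $f$ plus boundedness of all derivatives into rapid decay of all derivatives in two lines.
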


\begin{proof} For the proof of this lemma, we need to recall some facts from \cite{RamosSousa}. Firstly, we notice that, by the proof of \cite[Lemma~5 and ~6]{RamosSousa}, the following property holds: If $f \in \F^0(\R)$ satisfies \eqref{eq:fourier-uniqueness} for $\alpha, \beta \geq 0$ and $\alpha + \beta < 1,$ then the estimates\footnote{Estimates \eqref{eq:fk} and \eqref{eq:fhatj} also hold if the vanishing condition \eqref{eq:fourier-uniqueness} is true only for $n\geq n_0$.}
\begin{align}\label{eq:fk}
        |f(x)|  &\leq B_{\alpha,k} I_k(\widehat{f})|x|^{\left(\frac{\alpha -1}{\alpha}\right)k} \quad (\text{for } \ |x|\geq X_{\alpha,k})  \\
  \label{eq:fhatj}          |\widehat{f}(\xi)|  &\leq B_{\beta,j} I_{j}(f)|\xi|^{\left(\frac{\beta -1}{\beta}\right)j} \quad \; (\text{for } \ |x|\geq X_{\beta,j})
\end{align}
hold whenever $I_k(\wh f), I_j(f)<\infty$, where we have used the shorthand 
$$I_k(h) = \int_{\R} |x|^k |h(x)| \, \mmd x$$
and $X_{\alpha,k},B_{\alpha,k}$ are some explicit positive constants. In particular, if $f\in \F^C(\R)$ then \eqref{eq:fk} and \eqref{eq:fhatj} hold for $k,j\leq C$. What is important to note is that these estimates are self-improving. For instance, given $\ep>0,$ a simple computation shows that if  \eqref{eq:fhatj} holds for some $j\geq 1,$ then $I_k(\widehat{f}) < +\infty$ for $1 \leq k\leq j\frac{1-\beta}{\beta} - 1-\ep$, so that \eqref{eq:fk} holds for those $k'$s. Similarly, if \eqref{eq:fk} holds with some $k\geq 1$ then \eqref{eq:fhatj} holds for $1 \leq j\leq k\frac{1-\alpha}{\alpha} - 1-\ep$. Thus, we want to make sure that 
$$
j+1 \leq \Bigr\lfloor j\frac{1-\beta}{\beta} - 1-\ep\Bigr\rfloor\frac{1-\alpha}{\alpha}-1-\ep.
$$
Lower bounding $\lfloor x\rfloor> x-1$, the above inequality is satisfied if
$$
j\geq  (2+\ep)\beta/(1-\alpha-\beta)
$$
Noting that if \eqref{eq:fhatj} holds for all $j\geq 1$ then \eqref{eq:fk} holds for all $k\geq 1$ as well (and vice-versa), we deduce that we must have
$$
C \geq  \Bigl\lceil (2+\ep)\frac{\min(\alpha,\beta)}{1-\alpha-\beta}\Bigr\rceil \ \ \substack{{\longrightarrow} \\ {\small{\ep\to0}}} \ \  C(\alpha,\beta)
$$
Taking $\ep\to 0$ we conclude that if $f\in \F^C(\R)$ for some $C> C(\alpha,\beta)$ and $f$ satisfies \eqref{eq:fourier-uniqueness}, then $f\in \F^C(\R)$ for any $C\geq 0$. It is then not hard to show this implies $f\in \mathcal{S}(\R)$.
\end{proof}

\begin{rem}\label{remafterlemma}
We notice that estimates \eqref{eq:fk} and \eqref{eq:fhatj} still hold true if the vanishing condition \eqref{eq:fourier-uniqueness} holds only for $n\in \N\setminus R$, where $R$ is a finite set. Also, a simple computation shows
$$
\max_{(\alpha,\beta)\in A} \frac{\min(\alpha,\beta)}{1-\alpha-\beta} =\frac1{\sqrt{2}} = 0.7071...,
$$
and thus $2=\max_{(\alpha,\beta)\in A} C(\alpha,\beta)$. In particular, we conclude Theorem \ref{thm:fourier-uniqueness} is true if $f\in \F^{2+\delta}(\R),$ with $\delta > 0$ arbitrary, and the vanishing condition holds for all but finitely many $n\in \N$.
\end{rem}
Now we are ready to prove our main result.

\begin{proof}[\bf Proof of Theorem \ref{thm:teo-1}]  Throughout the proof we shall write 
	
	$$\int\varphi(x,y) \, \mmd\mu(x,y) = \int_{\R} \varphi(t,t^2) g(t) \mmd t,$$ 
	with $g\in L^1(\R).$
\noindent \underline{\emph{Two parallel lines}}.  We have
\begin{align}\label{id:muhatid}
\wh \mu(x,y) = \int_\R g(t)e^{-2\pi i (tx+t^2y)}\mmd t & = (2iy)^{-1/2} \int_\R \wh g(t+x) e^{-\pi t^2/(2iy)} \mmd t   \\
& = (2iy)^{-1/2} e^{i\pi x^2/(2y)}\int_\R \left(\wh g(t)e^{\pi i t^2/(2y)}\right) e^{\pi i t x/y} \mmd t,
\end{align}
where in the second identity we have used that the Fourier transform of $(2iy)^{-1/2}e^{-\pi t^2/(2iy)}$ is $e^{-2\pi i t^2y}$ (as a distribution). Note that $\wh \mu(x,0)=\wh g(x)$, hence the assumption $\wh \mu(x,0) \in L^1(\R)$ makes the calculation above correct. Now, if we define $f_y(t)=\wh g(t)e^{\pi i t^2/(2y)}$ then we see that 
\begin{align*}
\wh \mu(x,y)=(2iy)^{-1/2} e^{i\pi x^2/(2y)} \wh{f_y}(-x/(2y)), \quad y\neq 0.
\end{align*}
We obtain that $|\wh \mu(x,0)|=|f_1(x)|$ and $|\wh \mu(x,1)|=2^{-1/2} |\wh f_1(-x/2)|$. In particular, the vanishing condition $\wh \mu|_\Lambda=0$ with
$$
\Lambda = \{(\pm c_1 \, n^{\alpha},0):\, n \in \N\} \cup \{(\pm c_2\, n^{\beta},1):\, n \in \N\},
$$
which by symmetry we can assume $\alpha\geq \beta$, implies that $f_1(\pm c_1 n^\alpha)=\wh{f_1}(\pm {c_2}n^\beta/2)=0$ for all $n\in\N$. Note that if $\alpha \geq \beta$ and $(\alpha,\beta)\in A$ then $\beta \leq 1-1/\sqrt{2}=0.2928...$. Thus, if we assume that $(1+|x|^2)\wh \mu(x,0) \in L^1(\R)$ then $(1+|x|^2)f_1(x)\in L^1(\R)$ and we can apply estimate \eqref{eq:fhatj} to obtain
$$
|\wh f_1(\xi)| \leq B_{\beta,j} I_j(f_1) |x|^{j\frac{\beta-1}{\beta}} \quad (\text{for } |x|\geq X_{\beta,j})
$$
for $j\leq 2$. Since $2(\beta-1)/\beta+2<-1$ and $C\geq 2$ we conclude that $f_1\in \F^2(\R)$ and we can apply Lemma \ref{thm:reduction} and Remark \ref{remafterlemma} to conclude that $f_1 \in \mathcal{S}(\R)$. We can then use Theorem \ref{thm:fourier-uniqueness} to conclude that $f_1\equiv 0$. By uniqueness of solutions of the Schrödinger equation we must have $\wh \mu\equiv 0$, which implies $\mu \equiv 0$.

\noindent \underline{\emph{Three lines intersecting at a point}}.
We first use only the vanishing condition on
\[
\Lambda_{1} := \{\pm (c_1 n^{-\alpha}, a c_1 n^{-\alpha}): n \in \N^*\} \cup \{ \pm (c_2 n^{\beta/2},0): n \in \N\}.
\]
Setting $y=ax$ in identity \eqref{id:muhatid} we obtain
\begin{align*}
\wh \mu(x,ax) & = (2iax)^{-1/2} e^{i\pi x/(2a)} \int_\R \left(\wh g(t) e^{\pi i t/a}\right)e^{\pi i t^2/(2ax)} \mmd t \\ 
& = (2iax)^{-1/2} e^{i\pi x/(2a)}  \int_\R H(s) e^{\pi i s/(2ax)} \mmd s \\
& = (2iax)^{-1/2} e^{i\pi x/(2a)} \wh H(-1/(4ax)),
\end{align*}
where 
\[
H(s) = \begin{cases}
        \Bigl({\wh g(\sqrt{s}) e^{\pi i \sqrt{s}/a}+ \wh g(-\sqrt{s}) e^{-\pi i \sqrt{s}/a}}\Bigr)/({2\sqrt{s}}) & \, s> 0,\cr
        \quad \quad \quad \quad \quad \quad \quad \quad 0 & \, s \leq 0. 
       \end{cases}
\]
Since $\wh \mu(x,0)=\wh g(x) \in L^1(\R)$ the above computations are justified and $H\in L^1(\R)$. The vanishing condition of $\wh \mu$ on $\Lambda_1$ implies that $H(\pm {c_2}^2 \, n^{\beta})=\wh H(\pm\, n^{\alpha}/(4ac_1))=0$ for $n\in \N^*$. We assume now that $(1+|x|^C)\wh \mu(x,0) \in L^1(\R)$ with 
$
C=2\max\bigl(2,1+3\lceil \alpha/(1-\alpha)\rceil\bigr).
$
We can now apply estimate \eqref{eq:fhatj} to obtain
$$
|\wh H(\xi)| \leq B_{\alpha,j} I_j(H) |x|^{j\frac{\alpha-1}{\alpha}} \quad (|x|\geq X_{\alpha,j})
$$
for $j\leq C/2$. Since $C$ was chosen so that $\tfrac{C}2(\alpha-1)/\alpha+2 < -1$ and $C/2\geq 2$ we conclude that $H \in \F^2(\R)$. We can now use Lemma \ref{thm:reduction}, Remark \ref{remafterlemma} and Theorem \ref{thm:fourier-uniqueness} to conclude that $H\equiv 0$, that is, $\psi(x) := \wh g(x) e^{\pi i x/a}$ is odd. We can now repeat the argument for
\[
\Lambda_2 := \{\pm (c_3 n^{-\gamma}, d c_3 n^{-\gamma}): n \in \N^*\} \cup \{ \pm (c_2 n^{\beta/2},0): n \in \N\}
\]
(with a corresponding choice of $C$) and conclude that $\wh g(x) e^{\pi i x/d} = e^{\pi i x (1/d - 1/a)} \psi(x)$ is also odd. Setting $\alpha = 1/d - 1/a,$ we have 
\[
e^{-\pi i \alpha x} \psi(-x) = e^{\pi i \alpha x} \psi(x) = e^{\pi i \alpha x} \psi(-x)
\]
almost everywhere. Thus, for almost every $x$ in the support of $\psi,$ we must have $e^{-\pi i \alpha x} = e^{\pi i \alpha x}.$ But this happens only on a set of measure zero, which implies $\psi(x) = 0$ almost everywhere, and hence $\wh g \equiv 0$. We conclude as before that $\mu\equiv 0$.
\end{proof}

\section{Final Remarks}
Some comments are in order about Theorem \ref{thm:teo-1} and its proof. First of all, notice that the fact $\Lambda$ in the second part is constituted of discrete sets along three different lines is essential for our proof. Indeed, otherwise one can only conclude that
$\widehat{g}(\xi)e^{2\pi i r \xi}$ is odd for some $r \in \R$ and $\wh g(\pm c n^\beta)=0$ for all $n\in \N^*$.
It is not hard to notice that any odd smooth function $\wh g$ of sufficiently small support yields examples of such functions. 

Notice as well that we have \emph{always} considered at least one horizontal line. There are several possible reasons for such a choice. First, our very definition of weak uniqueness pair encompasses a condition inherent to the horizontal line case, and thus this choice becomes natural. On the other hand, generally speaking, uniqueness results for the 
Schr\"odinger equation usually assume some condition on the initial data, which, in this case, is translated into a horizontal line. Nevertheless, one might still inquire the case when the discrete set considered is in no such horizontal line. In this case, the basic feature of our proofs of relating the restriction of $\wh \mu$ on a line $L_1$ to its restriction on another line 
$L_2$ as Fourier transforms of one-dimensional functions is no longer available. In particular, relating $e^{it\partial_x^2}g(0)$ and $e^{it\partial_x^2}g(1)$ cannot be done through a simple process as in our proof, and we defer such an analysis for future work.

Finally, notice that relying only on the results in \cite{RamosSousa} gives us the range of exponentes allowed in Theorem \ref{thm:teo-1}, but one may wonder whether this is optimal. 
Indeed, in our proof itself we have used Lemma \ref{thm:reduction}, which is inspired by the recent work in progress by F. Nazarov and M. Sodin \cite{NazarovSodin}. In fact, in their work, the authors are able to extend the results in \cite{RamosSousa} to the maximal conjectured range. A version of their main results is as follows.

\begin{theorem}[Main theorem in \cite{NazarovSodin}]\label{thm:nazarov-sodin} Let $f \in \F^1(\R)$ be such that $f|_U = \widehat{f}|_V = 0,$ where the sets $U = \{u_j\}_{j \in \Z}, \, V = \{v_j\}_{j \in \Z}$ satisfy 
\begin{align*}
 \limsup_{|j| \to \infty} |u_j|^{p-1} (u_{j+1} - u_j) & < \frac{1}{2}, \cr 
 \limsup_{|j| \to \infty} |v_j|^{q-1}(v_{j+1} - v_j) & < \frac{1}{2},\cr 
\end{align*}
for some $p,q \in (1,+\infty), \frac{1}{p} + \frac{1}{q} = 1.$ Then $f \equiv 0.$ 
\end{theorem}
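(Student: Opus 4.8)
The plan is to follow the same high-level strategy used in Lemma \ref{thm:reduction}: first upgrade the a priori regularity of $f$ by a self-improving decay scheme, and only then extract uniqueness. The new difficulty is that the node sets $U,V$ are no longer pure powers, so I would begin by recasting the decay estimates \eqref{eq:fk}--\eqref{eq:fhatj} in a form that sees only the local gaps. Concretely, if $f(u_j)=0$ for all $j$ and $I_k(\wh f)<\infty$, then for $x$ lying between consecutive nodes I would bound $f(x)$ by a $k$-th order divided-difference (Newton) remainder against the $k$ nearest zeros $u_j,\dots,u_{j+k}$: since $\|f^{(k)}\|_\infty\le (2\pi)^k I_k(\wh f)$, this gives $|f(x)|\lesssim \tfrac{(2\pi)^k}{k!}\,I_k(\wh f)\prod_i |x-u_{j+i}|$. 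The hypothesis $\limsup |u_j|^{p-1}(u_{j+1}-u_j)<\tfrac12$ is exactly what controls the product of $k$ consecutive gaps near $x$ by (roughly) $\bigl(\tfrac12 |x|^{1-p}\bigr)^k$, yielding $|f(x)|\lesssim |x|^{-k(p-1)}$; symmetrically the condition on $V$ gives $|\wh f(\xi)|\lesssim |\xi|^{-k(q-1)}$ whenever $I_k(f)<\infty$. This reproduces \eqref{eq:fk}--\eqref{eq:fhatj} with $\alpha=1/p$, $\beta=1/q$, and makes the role of the constant $\tfrac12$ explicit.

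Next I would run the self-improvement loop of Lemma \ref{thm:reduction}: decay of $f$ makes moments $I_k(f)$ finite, which upgrades the decay of $\wh f$, and conversely. Here lies the main obstacle. Tracking only exponents, one round multiplies the available decay rate by $(p-1)(q-1)$, and the constraint $\tfrac1p+\tfrac1q=1$ forces $(p-1)(q-1)=1$ \emph{exactly}. Thus, unlike the region $A$ (where $\alpha+\beta<1$ drives a strictly expanding iteration), the endpoint iteration is marginal: the exponents never improve on their own, which is precisely why $\tfrac1p+\tfrac1q=1$ is the critical threshold and why a pure power-counting argument stalls. The resolution must come from the constants. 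The factor $\tfrac1{k!}$ in the divided difference, together with the \emph{strict} gap bound $<\tfrac12$, should produce a genuine (super-)geometric gain at every step; the technical heart of the proof is therefore a careful bookkeeping showing that the sequence of constants multiplying the critical power $|x|^{-k(p-1)}$ remains summable in $k$, so that $f$ and $\wh f$ in fact decay faster than \emph{every} polynomial.

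Once super-polynomial decay is established for both $f$ and $\wh f$, I would conclude, as in Lemma \ref{thm:reduction}, that $f\in\Sch(\R)$, the factorial bookkeeping above upgrading this to analytic (ideally entire) control. The final uniqueness step cannot quote Theorem \ref{thm:fourier-uniqueness} verbatim, since $U,V$ are not powers, so I would argue by zero-counting: the densities $u_j\sim j^{1/p}$, $v_j\sim j^{1/q}$ force $n_U(R)\sim R^{p}$ and $n_V(R)\sim R^{q}$ real zeros in $[-R,R]$, and the decay/analyticity obtained above caps the admissible number of zeros of $f$ and of $\wh f$ through Jensen's formula and a Phragm\'en--Lindel\"of argument. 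The conjugate relation $\tfrac1p+\tfrac1q=1$ makes these two zero budgets exactly saturate the quantitative uncertainty principle, and the \emph{strict} inequalities $<\tfrac12$ push the configuration just past what a nonzero function can sustain, forcing $f\equiv 0$.

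I expect the delicate part to be the constant analysis at the critical exponent in the second paragraph: one must convert the strict inequality $<\tfrac12$ and the factorial gains into a quantitatively closing iteration, keeping the implied constants in \eqref{eq:fk}--\eqref{eq:fhatj} uniform as $k\to\infty$. By contrast, the passage from \emph{nodes are powers} to \emph{nodes with a prescribed gap} $\limsup$ is comparatively routine once the divided-difference reformulation is in place, and the concluding zero-counting is standard entire-function theory; it is the marginal, constant-sensitive iteration that carries the real content.
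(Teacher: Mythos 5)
This statement is not proved in the paper: it is quoted verbatim from Nazarov and Sodin (a private communication, reference \cite{NazarovSodin}), so there is no in-paper argument to compare yours against. Judged on its own terms, your proposal has a genuine gap at its core. The claimed ``factorial gain'' that is supposed to rescue the critical iteration does not exist. In the Newton-remainder bound $|f(x)|\le \frac{\|f^{(k)}\|_\infty}{k!}\prod_{i}|x-u_{j+i}|$, the product of distances from $x$ to $k$ consecutive nodes with local gap $\delta\approx\frac12|x|^{1-p}$ is itself of size roughly $k!\,\delta^{k}$ (up to factors $2^{\pm k}$), so the $1/k!$ is exactly consumed and what survives is a constant $\bigl(2\pi\cdot\tfrac12\bigr)^{k}=\pi^{k}$, which grows geometrically rather than decays. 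Worse, the exponent bookkeeping at the endpoint does not merely ``stall'': if $|f(x)|\lesssim|x|^{-a}$ then $I_k(f)<\infty$ only for $k<a-1$, giving $|\widehat f(\xi)|\lesssim|\xi|^{-(a-1)(q-1)}$, and a second half-step gives $|f(x)|\lesssim|x|^{-\left((a-1)(q-1)-1\right)(p-1)}=|x|^{-(a-p)}$ since $(p-1)(q-1)=1$. Each full round therefore \emph{loses} $p$ in the decay exponent because of the two ``$-1$'' losses from integrating moments, and there is no multiplicative compensation. This is precisely why Lemma \ref{thm:reduction} is stated only for $\alpha+\beta<1$, with $C(\alpha,\beta)\to\infty$ as $\alpha+\beta\to1$: the self-improvement scheme you are transplanting provably cannot reach super-polynomial decay at the critical density $\tfrac1p+\tfrac1q=1$, and ``careful bookkeeping of constants'' cannot repair an iteration whose exponents strictly decrease.

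The concluding step is also unsupported. Jensen's formula and Phragm\'en--Lindel\"of require $f$ (and $\widehat f$) to extend to entire functions of controlled order; membership in $\mathcal{S}(\R)$ gives no analytic continuation at all, and nothing in your second paragraph produces the super-exponential decay of $\widehat f$ that would be needed to make $f$ entire of finite exponential type. Even granting analyticity, you would need a quantitative statement playing the zero-count $n_U(R)\sim R^{p}$ of $f$ against the growth permitted by the decay of $\widehat f$, and the relation between the threshold constant $\tfrac12$ and such a zero-counting bound is asserted, not derived. In short: the reduction of the hypotheses on $U,V$ to gap estimates in the spirit of \eqref{eq:fk}--\eqref{eq:fhatj} is fine, but the two mechanisms you propose to cross the critical line --- a factorial gain in the iteration and an entire-function zero count --- are, respectively, incorrect and missing. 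Whatever argument Nazarov and Sodin use at the endpoint, it must contain an idea beyond the self-improving decay loop of this paper.
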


With this result in hands, it is not hard to see that we may substitute the condition in Theorem \ref{thm:teo-1} to simply $\alpha + \beta < 1$, in the two lines case, and  $\max\{\alpha,\gamma\} + \beta < 1$ in the three lines case. We believe that other techniques stemming from \cite{NazarovSodin} will be helpful in possibly removing the weak condition of Theorem \ref{thm:teo-1}.

\section*{Acknowledgements} 

We thank the anonymous referee for helpful comments. J.P.G.R. acknowledges financial support through ERC grant agreement No. 721675 ``Regularity and Stability in Partial Differential Equations (RSPDE)''.

\end{document}